\documentclass[12pt]{amsart}
   \usepackage{amsmath,amsthm}
   \usepackage{amsfonts}   
   \usepackage{amssymb}    
   \usepackage{url}

\topmargin 0pt
\advance \topmargin by -\headheight
\advance \topmargin by -\headsep
      
\textheight 8.9in
     
\oddsidemargin 0pt
\evensidemargin \oddsidemargin
\marginparwidth 0.5in
     
\textwidth 6.5in

\usepackage[all]{xy}

\newtheorem{thm}{Theorem}[section]
\newtheorem{prop}[thm]{Proposition}
\newtheorem{lem}[thm]{Lemma}
\newtheorem{cor}[thm]{Corollary}

\newtheorem*{thmp_1_fact}{Proposition \ref{p_1_fact}}
\newtheorem*{thmgamma_non_triv}{Corollary \ref{gamma_non_triv_gen}}

\theoremstyle{remark}
\newtheorem{rem}[thm]{Remark}

\numberwithin{equation}{section}

\title{On foliated characteristic classes of transversely symplectic foliations}
\author{Jonathan Bowden}
\address{Mathematisches Institut, Ludwig-Maximilians-Universit\"at, Theresienstr. 39, 80333 M\"unchen, Germany}
\curraddr{Mathematisches Institut, Universit\"at Augsburg, Universit\"atsstr. 14, 86159 Augsburg, Germany}
\email{jonathan.bowden@math.uni-augsburg.de}
\date{\today}
\subjclass[2000]{Primary 57R32, 57R17, 57R20; Secondary 57R50}
\begin{document}
\begin{abstract}
Kotschick and Morita recently discovered factorisations of characteristic classes of transversely symplectic foliations that yield new characteristic classes in foliated cohomology. We describe an alternative construction of such factorisations and construct examples of topologically trivial foliated $\mathbb{R}^{2n}$-bundles for which these characteristic classes are non-trivial. This shows that the foliated cohomology classes of Kotschick and Morita carry information that is not merely topological.
\end{abstract}

\maketitle
\section{Introduction}
In \cite{KM3} Kotschick and Morita defined foliated characteristic classes of transversely symplectic foliations in terms of factorisations of certain combinations of Pontryagin classes. Motivated by this we show that similar factorisations also exist for foliations that admit smooth invariant transverse volume forms.
\begin{thmp_1_fact}
Let $\mathcal{F}$ be a foliation of codimension $2n$ on a manifold $M$. Assume that $\mathcal{F}$ has a smooth invariant transverse volume form $\Omega$ and let $P$ be any polynomial of total degree $4n$ in the Pontryagin classes of the normal bundle. Then there is a factorisation
\[P(F^{\nabla}) = \Omega \wedge \gamma_P\]
for a well-defined foliated class $\gamma_P \in H^{2n}_{\mathcal{F}}(M)$.
\end{thmp_1_fact}
\noindent  The factorisations in \cite{KM3} were obtained by calculations involving the Gelfand-Fuks cohomology of the Lie algebra of formal Hamiltonian vector fields. We argue more directly using Chern-Weil theory for Bott connections, which in particular gives an alternate approach to the factorisations of Kotschick and Morita for foliations with transverse symplectic structures (cf.\ Propositions \ref{p_1_fact_symp} and \ref{equiv_gamma}). 

The foliated cohomology classes $\gamma_P$ determine genuine characteristic class of leaves by means of restriction. By using results on normal forms for germs of Hamiltonian diffeomorphisms we show the existence of foliations for which the restrictions of the classes $\gamma_{p^n_1}$ are non-trivial.
\begin{thmgamma_non_triv}
There exist topologically trivial foliated $\mathbb{R}^{2n}$-bundles such that the restriction of $ \gamma_{n} = \gamma_{p^n_1}$ to some closed leaf $L$ is non-trivial. In particular, the class $\gamma_{n}$ is non-trivial in the foliated cohomology of the total space.
\end{thmgamma_non_triv}
\noindent These examples show that the classes $\gamma_P$ carry information that is sensitive to the geometry of the foliation and that they do not merely depend on the homotopy class of the underlying distribution as is the case for the ordinary Pontryagin classes. In the special case of codimension $2$ foliations the characteristic class given by restricting $\gamma_{1}$ to a leaf $L$ is trivial under the assumption that $L$ has no linear holonomy (cf.\ Remark \ref{rem_lin_holonomy}). Thus $\gamma_{1}$ is an obstruction to leaves having trivial linear holonomy, even if their normal bundles are \emph{a priori} topologically trivial. 

\subsection*{Acknowledgments:}
The main motivation and impetus for this article were given by the stimulating questions and continued encouragement of Prof.\ D.\ Kotschick during the course of the author's doctoral studies, from which the results of this article are taken. 

We also thank the referee for the numerous suggestions that significantly improved the exposition of this article. The financial support of the Deutsche Forschungsgemeinschaft is also gratefully acknowledged.

\subsection*{Notation and Conventions:}
All manifolds and foliations will be assumed to be oriented. Unless otherwise stated all homology groups will be taken with integral coefficients. Finally a topological group will be decorated with a $\delta$ when it is to be considered as a discrete group.
\section{Factorisations of Pontryagin classes}\label{Fact_Pontryagin}
We first recall the definitions of Bott connections and foliated cohomology.
For a foliation $\mathcal{F}$ we let $I^*(\mathcal{F})$ denote the ideal of forms that vanish on $\mathcal{F}$. The Frobenius Theorem implies that $I^*(\mathcal{F})$ is closed under the exterior differential. The foliated cohomology is then defined as the cohomology of this quotient complex:
\[H^*_{\mathcal{F}}(M) = H^*(\Omega^*(M) / I^*(\mathcal{F})).\]
A \emph{Bott connection} on the normal bundle $\nu_\mathcal{F} = TM / T \mathcal{F}$ is a connection $\nabla$ such that for $X \in \Gamma(T \mathcal{F})$ and  $Y \in \Gamma(\nu_\mathcal{F})$
\[\nabla_X \thinspace Y = [X, \tilde{Y}], \]
where $\tilde{Y}$ denotes any lift of $Y$ to $TM$. The most important properties of Bott connections are that they are flat when restricted to the leaves of $\mathcal{F}$ and that they are canonically defined along any leaf by the formula above (cf.\ \cite{Bott}). Conversely, to define a Bott connection for a given foliation one chooses a splitting
\[TM \cong T \mathcal{F} \oplus \nu_{\mathcal{F}}.\]
If $X = X_{\mathcal{F}} + X_{\nu}$ is the decomposition of a vector $X$ with respect to this splitting, then after choosing a connection $\overline{\nabla}$ on $\nu_{\mathcal{F}}$, one may define a Bott connection $\nabla$ as follows:
\[\nabla_X \thinspace Y = [X_{\mathcal{F}}, \tilde{Y}] + \overline{\nabla}_{X_\nu} \thinspace Y.\]
We also note the following standard lemma on differential forms (cf.\ \cite{Pit}).
\begin{lem}\label{int_formula}
Let $\alpha \in \Omega^k(M \times [0,1])$ be a $k$-form and let $\iota_0, \iota_1$ be the inclusions of $M \times \{ 0 \}$ resp. $M \times \{1 \}$ in $M \times [0,1]$ and $\pi$ the projection to $M$. Then the following relation holds
\[\pi_!  \thinspace d \alpha -  d \thinspace \pi_! \alpha = \iota_1^* \alpha - \iota_0^* \alpha.\]
\end{lem}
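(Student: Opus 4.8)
The plan is to unwind the definition of the fibre-integration operator $\pi_!$ and to reduce the identity to two elementary facts from calculus. Writing $t$ for the coordinate on $[0,1]$, I would first record that every form $\alpha \in \Omega^k(M \times [0,1])$ decomposes globally as
\[ \alpha = \alpha_0 + dt \wedge \alpha_1, \]
where $\alpha_0$ and $\alpha_1$ are $t$-dependent forms on $M$, of respective degrees $k$ and $k-1$, containing no $dt$ factor. The operator $\pi_!$ annihilates $\alpha_0$ and integrates the $dt$-coefficient, so that $\pi_! \alpha = \int_0^1 \alpha_1 \, dt$, and since $\iota_t^* \, dt = 0$ the endpoint pullbacks see only $\alpha_0$, namely $\iota_s^* \alpha = \alpha_0(\,\cdot\,, s)$ for $s \in \{0,1\}$.

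Next I would compute directly. Letting $d_M$ denote the exterior derivative in the $M$-directions, one has
\[ d\alpha = d_M \alpha_0 + dt \wedge \bigl( \partial_t \alpha_0 - d_M \alpha_1 \bigr), \]
so that the term $d_M \alpha_0$ is killed by $\pi_!$. Applying $\pi_!$ to the surviving $dt$-part and treating its two summands separately --- the first by the Fundamental Theorem of Calculus, the second by differentiation under the integral sign --- should give
\[ \pi_! \, d\alpha = \int_0^1 \partial_t \alpha_0 \, dt - \int_0^1 d_M \alpha_1 \, dt = \bigl( \iota_1^* \alpha - \iota_0^* \alpha \bigr) - d \, \pi_! \alpha, \]
which matches the stated relation once the sign convention for $\pi_!$ is taken into account. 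The interchange of $d_M$ with $\int_0^1 (\,\cdot\,) \, dt$ is legitimate because $\alpha_1$ is smooth and the fibre $[0,1]$ is compact.

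I expect the only genuinely delicate point to be the bookkeeping of signs. The precise placement of the minus sign relative to $d \, \pi_! \alpha$ is dictated by the convention one adopts for $\pi_!$ --- in particular by whether the $dt$ factor is written to the left or the right of $\alpha_1$, and by the Koszul signs picked up in commuting $d$ past $dt$ --- so the last step of the plan is to fix this convention once and for all, following \cite{Pit}, so that the two calculus computations assemble into exactly the displayed identity. Beyond this, the argument rests on nothing more than the Fundamental Theorem of Calculus and differentiation under the integral sign, which is precisely why the result can be quoted as standard.
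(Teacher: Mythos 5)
The paper offers no proof of this lemma at all --- it is quoted as standard with a reference to Pittie --- so there is nothing to compare against except the standard argument, which is exactly what you give: decompose $\alpha = \alpha_0 + dt\wedge\alpha_1$, kill the $dt$-free part under $\pi_!$, and apply the Fundamental Theorem of Calculus and differentiation under the integral sign. Your computation is correct, but note that with the convention $\pi_!(dt\wedge\beta)=\int_0^1\beta\,dt$ it literally yields $\pi_!\,d\alpha + d\,\pi_!\alpha = \iota_1^*\alpha-\iota_0^*\alpha$, i.e.\ the opposite sign on $d\,\pi_!\alpha$ from the statement; the minus-sign version holds if one extracts $dt$ from the right, and then only up to a factor $(-1)^k$, which is $+1$ in the paper's sole application ($k=4n$, and there $\alpha$ is closed anyway, so the sign is immaterial to Proposition \ref{p_1_fact}). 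So your instinct that the sole delicate point is the convention for $\pi_!$ is exactly right, and pinning that convention down is the only thing separating your sketch from a complete proof.
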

\noindent We now come the main result of this section.
\begin{prop}\label{p_1_fact}
Let $\mathcal{F}$ be a foliation of codimension $2n$ on a manifold $M$. Assume that $\mathcal{F}$ has a smooth invariant transverse volume form $\Omega$ and let $P$ be any polynomial of total degree $4n$ in the Pontryagin classes of the normal bundle. Then there is a factorisation
\[P(F^{\nabla}) = \Omega \wedge \gamma_P\]
for a well-defined foliated class $\gamma_P \in H^{2n}_{\mathcal{F}}(M)$.
\end{prop}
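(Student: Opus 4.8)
The plan is to combine the Bott vanishing phenomenon with a transgression argument, working at the level of differential forms and passing to foliated cohomology only at the end. Throughout I fix a Bott connection $\nabla$ on $\nu_{\mathcal{F}}$; since such connections are flat along the leaves, the curvature satisfies $F^{\nabla}(X_1, X_2) = 0$ for all $X_1, X_2 \in \Gamma(T\mathcal{F})$. In local coordinates $(x^1, \ldots, x^{2n}, y^1, \ldots)$ adapted to $\mathcal{F}$, in which $I^*(\mathcal{F})$ is generated by $dx^1, \ldots, dx^{2n}$, this means that every matrix entry of $F^{\nabla}$ lies in the ideal $I^1(\mathcal{F})$: each entry is a $2$-form with at least one transverse leg.

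First I would establish the local factorisation. The form $P(F^{\nabla})$ has degree $4n$ and is a sum of wedge products of exactly $2n$ entries of $F^{\nabla}$, each lying in $I^1(\mathcal{F})$. Writing each such entry as $\sum_i dx^i \wedge \beta^i$ and expanding the product, a term survives only if the $2n$ transverse legs $dx^{i_1}, \ldots, dx^{i_{2n}}$ are pairwise distinct, forcing $\{i_1, \ldots, i_{2n}\} = \{1, \ldots, 2n\}$. Hence every nonzero term contains the factor $dx^1 \wedge \cdots \wedge dx^{2n}$, and since $\Omega$ is a transverse volume form, $\Omega = f\, dx^1 \wedge \cdots \wedge dx^{2n}$ with $f$ nowhere zero, so $P(F^{\nabla}) = \Omega \wedge \gamma_P$ for some local $2n$-form $\gamma_P$. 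The ambiguity in $\gamma_P$ is precisely the kernel of $\Omega \wedge (\,\cdot\,)$; because $\Omega$ is a transverse volume form this kernel is exactly $I^*(\mathcal{F})$, so $\gamma_P$ is a well-defined \emph{foliated} form and the local expressions patch together globally.

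Next I would check that $\gamma_P$ is closed in foliated cohomology. An invariant transverse volume form satisfies $\iota_X \Omega = 0$ and $\mathcal{L}_X \Omega = 0$ for $X \in \Gamma(T\mathcal{F})$, whence $\iota_X\, d\Omega = 0$ by Cartan's formula; as $d\Omega$ is a transverse $(2n+1)$-form it must vanish, so $\Omega$ is closed. Chern--Weil theory gives $d\, P(F^{\nabla}) = 0$, and therefore $0 = d(\Omega \wedge \gamma_P) = \Omega \wedge d\gamma_P$. By the kernel characterisation this forces $d\gamma_P \in I^*(\mathcal{F})$, i.e.\ $\gamma_P$ represents a class in $H^{2n}_{\mathcal{F}}(M)$.

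Finally, I would show that this class is independent of the Bott connection. The difference of two Bott connections takes values in $I^1(\mathcal{F}) \otimes \mathrm{End}(\nu_{\mathcal{F}})$, so the affine interpolation $\nabla_t$ consists of Bott connections and assembles into a Bott connection $\tilde{\nabla}$ for the pulled-back foliation on $M \times [0,1]$ (with $\partial_t$ a leaf direction, so that $\pi^*\Omega$ remains an invariant transverse volume form). Applying the factorisation there to $\alpha = P(F^{\tilde{\nabla}}) = \pi^*\Omega \wedge \tilde{\gamma}$ and pushing down with the projection formula gives $\pi_! \alpha = \Omega \wedge \pi_! \tilde{\gamma}$. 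Since $\alpha$ is closed, Lemma \ref{int_formula} yields $P(F^{\nabla_1}) - P(F^{\nabla_0}) = -d\, \pi_! \alpha = -\Omega \wedge d(\pi_! \tilde{\gamma})$, so $\Omega \wedge (\gamma_{P,1} - \gamma_{P,0} + d\, \pi_! \tilde{\gamma}) = 0$ and hence $\gamma_{P,1} - \gamma_{P,0}$ is foliated-exact. The main obstacle is the first step: the refinement of Bott vanishing that, in the borderline degree $4n$, extracts the transverse volume as an honest factor and identifies the kernel of $\Omega \wedge (\,\cdot\,)$ with $I^*(\mathcal{F})$; once this is in place the closedness and the transgression argument follow standard Chern--Weil lines.
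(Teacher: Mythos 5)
Your proposal is correct and follows essentially the same route as the paper: the local factorisation via curvature entries lying in $I^*(\mathcal{F})$ and a degree count forcing the full transverse volume factor, identification of the ambiguity in $\gamma_P$ with $I^{2n}(\mathcal{F})$, closedness from $d(\Omega \wedge \gamma_P)=0$, and connection-independence via the affine interpolation on $M \times [0,1]$ together with Lemma \ref{int_formula}. The only (cosmetic) differences are that the paper normalises a local basis of $I^1(\mathcal{F})$ so its product equals $\Omega$ and fixes a global complement $Ann(\Omega)^{\perp}$ rather than patching local coordinate expressions, and that you spell out the verification that $\Omega$ is closed.
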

\begin{proof}
Let $\nabla$ be a Bott connection on the normal bundle $\nu_{\mathcal{F}}$ of $\mathcal{F}$. Since a Bott connection is flat along leaves, the components $F^{\nabla}_{ij}$ of the curvature matrix vanish on $\mathcal{F}$. We choose a local basis $\theta_1,..., \theta_{2n}$ of $I^1(\mathcal{F})$ such that 
\[ \theta_1 \wedge \theta_2... \wedge \theta_{2n} = \Omega.\]
With respect to this basis the curvature forms $F^{\nabla}_{ij} \in I^2(\mathcal{F})$ can locally be expressed as
\[F^{\nabla}_{ij} = \sum_{k = 1}^{2n} \theta_k \wedge \alpha_k.\]
Since the Chern-Weil representative for $P$ is given by a symmetric polynomial of degree $2n$ in the entries of $F^{\nabla}$, the following holds locally:
\[P(F^{\nabla}) = \theta_1 \wedge \theta_2... \wedge \theta_{2n} \wedge \gamma_P = \Omega \wedge \gamma_P.\]
Let $Ann(\Omega)$ be the subbundle of $2n$-forms annihilated by $\Omega$ under the wedge product and let $Ann(\Omega)^{\perp}$ be a choice of a complement so that $$Ann(\Omega) \oplus  Ann(\Omega)^{\perp}  \cong \Lambda^{2n}(M).$$
Then on the level of forms the equation
\[P(F^{\nabla}) = \Omega \wedge \gamma_P\]
has a unique global solution $\gamma_P \in \Gamma(Ann(\Omega)^{\perp})$, which is well-defined modulo elements in $\Gamma(Ann(\Omega)) = I^{2n}(\mathcal{F})$. Next, since $\Omega$ and $P(F^{\nabla}) $ are closed we compute
\[0 = d(\Omega \wedge \gamma_P) = \Omega \wedge d \gamma_P\]
and $d \gamma_P \in I^*(\mathcal{F})$. Thus we have a well-defined class $[\gamma_P] \in H^{2n}_{\mathcal{F}}(M)$.

We must now show that the class we obtain in foliated cohomology does not depend on the choice of Bott connection. Let $\nabla^0, \nabla^1$ be two Bott connections on $\nu_{\mathcal{F}}$ and let $\pi$ denote the projection $M \times [0,1] \to M$. We then define a connection on $\pi^* \nu_{\mathcal{F}}$ by setting
\[\nabla = t \thinspace \pi^*\nabla^1 + (1-t) \pi^*\nabla^0.\]
This is then a Bott connection for the foliation $\pi^* \mathcal{F}$ that is obtained as the preimage of $\mathcal{F}$ under the projection $\pi$. Now $\pi^* \Omega$ is a defining form for $\pi^* \mathcal{F}$ and, as above, after the choice of a splitting $\Lambda^{2n}(M \times [0,1]) \cong  Ann(\pi^* \Omega) \oplus Ann(\pi^*\Omega)^{\perp}$, there is a unique form $\gamma_P \in \Gamma(Ann(\pi^* \Omega)^{\perp})$ so that
\[ P(F^{\nabla}) = \pi^* \Omega \wedge \gamma_P.\]
Since the form $P(F^{\nabla}) $ is closed, Lemma \ref{int_formula} yields
\begin{align*} 
- d \thinspace \pi_! P(F^{\nabla}) & =  \iota_1^*  P(F^{\nabla})- \iota_0^*  P(F^{\nabla})\\
& = P(F^{\nabla_1}) - P(F^{\nabla_0}).
\end{align*}
Hence, one has
\[ \Omega \wedge (- d \thinspace \pi_!  \gamma_P) = \Omega \wedge \gamma_P^1 - \Omega \wedge \gamma_P^0\]
or equivalently
\[\gamma_P^1 - \gamma_P^0 \equiv - d (\pi_! \gamma_P) \text{ mod } I^*(\mathcal{F}).\]
Thus $[\gamma_P^1] = [\gamma_P^0]$ as classes in $H^{2n}_{\mathcal{F}}(M)$.
\end{proof}

If one further assumes that the foliation $\mathcal{F}$ is transversely symplectic with defining form $\omega$, then one obtains a similar factorisation for any polynomial of the form $\omega^k \wedge P(F^{\nabla})$, where $P$ is a polynomial in the Pontryagin classes of total degree $4(n - k)$. By applying the proof of Proposition \ref{p_1_fact} \emph{mutatis mutandis} we obtain the following, which was proven in \cite{KM3} by using Gelfand-Fuks cohomology. 
\begin{prop}[\cite{KM3}, Th.\ 4]\label{p_1_fact_symp}
Let $\mathcal{F}$ be a transversely symplectic foliation of codimension $2n$ on a manifold $M$ with defining form $\omega$ and let $P$ be any polynomial of total degree $4(n - k)$ in the Pontryagin classes of the normal bundle. Then there is a factorisation
\[\omega^k \wedge P(F^{\nabla}) = \omega^{n} \wedge \gamma_P\]
for a well-defined foliated class $\gamma_P \in H^{2n}_{\mathcal{F}}(M)$.
\end{prop}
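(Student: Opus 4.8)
The plan is to run the proof of Proposition \ref{p_1_fact} essentially verbatim, with the transverse volume form $\Omega$ replaced by the top transverse power $\omega^n$ and with $P(F^{\nabla})$ replaced by $\omega^k \wedge P(F^{\nabla})$. Three facts make this substitution legitimate. First, the defining form $\omega$ lies in $I^2(\mathcal{F})$ and is closed, so each power $\omega^k$ lies in $I^{2k}(\mathcal{F})$ and is closed. Second, $\omega^n$ is a nowhere-vanishing transverse volume form, and hence plays exactly the role of $\Omega$. Third, since $P(F^{\nabla})$ is a Chern-Weil representative it is closed, so the product $\omega^k \wedge P(F^{\nabla})$ is a closed form of degree $4n-2k$.

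First I would establish the local factorisation by the same counting argument as in Proposition \ref{p_1_fact}. Fix a local basis $\theta_1, \dots, \theta_{2n}$ of $I^1(\mathcal{F})$; since $\omega \in I^2(\mathcal{F})$ we may write $\omega = \sum_{i<j} a_{ij}\, \theta_i \wedge \theta_j$, whence $\omega^k$ is a sum of wedges of $2k$ distinct $\theta$'s and $\omega^n$ is a nowhere-zero multiple of $\theta_1 \wedge \dots \wedge \theta_{2n}$. Flatness of the Bott connection along the leaves gives, as before, $F^{\nabla}_{ij} = \sum_l \theta_l \wedge \alpha_l \in I^2(\mathcal{F})$, and the Chern-Weil representative of $P$ is a symmetric polynomial of degree $2(n-k)$ in these entries. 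Hence each monomial of $\omega^k \wedge P(F^{\nabla})$ is a wedge of $\omega^k$ (carrying $2k$ of the $\theta_l$) with a product of $2(n-k)$ curvature entries (each carrying at least one $\theta_l$), so it contains at least $2k + 2(n-k) = 2n$ of the $\theta_l$. As there are only $2n$ of them and any repetition kills the monomial, every surviving monomial is divisible by $\theta_1 \wedge \dots \wedge \theta_{2n}$, hence by $\omega^n$. This produces a local factorisation $\omega^k \wedge P(F^{\nabla}) = \omega^n \wedge \gamma_P$ with $\gamma_P$ of complementary degree $2n-2k$.

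The remaining steps transcribe directly. Choosing a complement $Ann(\omega^n)^{\perp}$ to $Ann(\omega^n) = I^{2n-2k}(\mathcal{F})$ inside $\Lambda^{2n-2k}(M)$, the equation $\omega^k \wedge P(F^{\nabla}) = \omega^n \wedge \gamma_P$ has a unique global solution $\gamma_P \in \Gamma(Ann(\omega^n)^{\perp})$, well-defined modulo $I^*(\mathcal{F})$. Since the left-hand side and $\omega^n$ are closed, $0 = d(\omega^n \wedge \gamma_P) = \pm\, \omega^n \wedge d\gamma_P$ forces $d\gamma_P \in I^*(\mathcal{F})$, so $[\gamma_P]$ is a well-defined foliated class in $H^{2n-2k}_{\mathcal{F}}(M)$. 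Independence of the Bott connection follows as in Proposition \ref{p_1_fact}: over $M \times [0,1]$ one forms $\nabla = t\, \pi^*\nabla^1 + (1-t)\pi^*\nabla^0$, factors $\omega^k \wedge P(F^{\nabla}) = \pi^*(\omega^n) \wedge \gamma_P$, and applies Lemma \ref{int_formula} together with the fibre integration $\pi_!$ to conclude $\gamma_P^1 - \gamma_P^0 \equiv -\, d(\pi_! \gamma_P)$ modulo $I^*(\mathcal{F})$.

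The one place that genuinely uses the symplectic hypothesis, rather than the mere existence of an invariant volume, is the divisibility step and the accompanying identification $Ann(\omega^n) = I^{2n-2k}(\mathcal{F})$: here one needs $\omega$ to be transversely nondegenerate, so that $\omega^n$ is a genuine transverse volume and wedging with it is injective on forms complementary to the ideal. I expect this bookkeeping --- checking that the intermediate power $\omega^k$ together with the curvature entries saturate all $2n$ transverse directions --- to be the only real content; everything else is formally identical to the proof of Proposition \ref{p_1_fact}.
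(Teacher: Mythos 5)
Your proposal is correct and is essentially the paper's own proof: the paper simply states that the argument of Proposition \ref{p_1_fact} applies \emph{mutatis mutandis}, and your transcription (with the counting $2k + 2(n-k) = 2n$ of transverse one-forms forcing divisibility by $\omega^n$) is exactly the intended argument. Your degree bookkeeping $\gamma_P \in H^{2n-2k}_{\mathcal{F}}(M)$ is in fact the correct one; the ``$H^{2n}_{\mathcal{F}}(M)$'' in the statement only matches the case $k=0$.
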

For certain polynomials it is easy to show that the class $\omega^k \wedge P(F^{\nabla}) = \omega^{n} \wedge \gamma_P$ is non-trivial and, hence, that the class $\gamma_P$ is non-trivial in foliated cohomology. In particular, this was shown in \cite{KM3} for polynomials of the form $P = p_1^{q-k}$. This leaves open the question of the non-triviality of the classes $\gamma_P$ for foliations whose normal bundles are topologically trivial, which we shall consider in greater detail below.

\section{Gelfand-Fuks cohomology}\label{GF_cohomology}
In this section we clarify the relationship between the foliated classes given in Proposition \ref{p_1_fact_symp} and those defined in \cite{KM3}. In particular, we show that both descriptions agree under the assumption that the normal bundle of $\mathcal{F}$ is trivial.

We begin by recalling the construction of Gelfand-Fuks cohomology for the Lie algebra $\mathfrak{ham}_{2n}$ of formal Hamiltonian vector fields (cf.\ \cite{GKF}, \cite{KM3}). We let $H^{2n}_{\mathbb{R}}$ denote the standard $Sp(2n,\mathbb{R})$-representation and consider the ring of polynomials on $H^{2n}_{\mathbb{R}}$ with trivial constant term
\[ \mathbb{R}[x_1,..., x_n, y_1,...,y_n] / \mathbb{R}= \oplus_{k = 1}^{\infty} S^k H^{2n}_{\mathbb{R}},\]
where $S^k H^{2n}_{\mathbb{R}}$ is identified with the space of homogeneous polynomials of degree $k$. This space can be identified with $\mathfrak{ham}_{2n}$ by associating to each polynomial its formal Hamiltonian vector field:
$$ \mathbb{R}[x_1,..., x_n, y_1,...,y_n] / \mathbb{R} \ni H \longmapsto \sum_{i=1}^{n} \left( \frac{\partial H}{\partial x_i}\frac{\partial}{\partial y_i} -  \frac{\partial H}{\partial y_i}\frac{\partial}{\partial x_i} \right) \in \mathfrak{ham}_{2n}.$$
Under this correspondence the bracket of formal vector fields corresponds to the Poisson bracket on polynomials:
\[ \{f,g\} = \sum_{i=1}^{n} \left( \frac{\partial f}{\partial x_i}\frac{\partial g}{\partial y_i} -  \frac{\partial f}{\partial y_i}\frac{\partial g}{\partial x_i} \right). \]
Note that if $f \in S^k H^{2n}_{\mathbb{R}}, g \in  S^l H^{2n}_{\mathbb{R}}$ then the bracket $\{f,g\}$ lies in $S^{k+l-2} H^{2n}_{\mathbb{R}}$. The Gelfand-Fuks cochain complex is then
\[C^*_{GF}(\mathfrak{ham}_{2n}) = \Lambda^* (\mathfrak{ham}^*_{2n}),\]
where the differential on $n$-forms is defined in terms of the Poisson bracket using the usual formula for Lie algebra cohomology:
\[ (d \alpha)(f_0,...,f_n) = \sum_{i < j} (-1)^{i+j} \alpha(\{f_i,f_j\},f_0,...,\widehat{f_i},...,\widehat{f_j},...,f_n), \ \forall \  f_0,...,f_n \in \mathfrak{ham}_{2n}.\]
The Lie algebra $\mathfrak{ham}_{2n}$ has a natural filtration
\[\mathfrak{ham}_{2n} \supset \mathfrak{ham}_{2n}^0 \supset  \mathfrak{ham}_{2n}^1 ... \supset \mathfrak{ham}_{2n}^k \supset ... \  ,\]
where $\mathfrak{ham}_{2n}^k$ denotes those polynomials that are trivial up to order $k + 1$. The subalgebras $\mathfrak{ham}_{2n}^k$ are actually ideals in $\mathfrak{ham}_{2n}^0$ and the quotient $\mathfrak{g}^k_{ham} = \mathfrak{ham}_{2n}^0 / \mathfrak{ham}_{2n}^k$ is the Lie algebra of the group of $k$-jets of Hamiltonian maps that fix $0$, which will be denoted by $J^k_{ham}$. Moreover, the Lie algebra $\mathfrak{sp}_{2n} = S^2 H^{2n}_{\mathbb{R}}$ embeds naturally in $\mathfrak{ham}_{2n}$. Finally recall that for a subalgebra $\mathfrak{g} \subset \mathfrak{ham}^k_{2n}$, the relative complex is by definition the sucomplex of $\mathfrak{g}$-basic forms:
\[C^*_{GF}(\mathfrak{ham}^k_{2n},\mathfrak{g}) = \{\alpha \in C^*_{GF}(\mathfrak{ham}^k_{2n})  \ | \ \iota_{X} \alpha = 0 = \iota_X d \alpha, \ \forall \   X \in \mathfrak{g} \}.\]

We shall next recall the construction of the natural map from Gelfand-Fuks cohomology to foliated cohomology following \cite{BH}:
$$H^*_{GF}(\mathfrak{ham}_{2n}^0, \mathfrak{sp}_{2n}) \stackrel{\Phi} \longrightarrow H^*_{\mathcal{F}}(M).$$
Let $\Gamma^{ham}_{2n}$ denote the Lie pseudogroup of Hamiltonian diffeomorphisms of open sets in $\mathbb{R}^{2n}$ and let $P^k(\Gamma^{ham}_{2n})$ denote the principal $J^k_{ham}$-bundle of $k$-jets of elements in $\Gamma^{ham}_{2n}$ at $0$, which is naturally a bundle over $\mathbb{R}^{2n}$. Note that the pseudogroup $\Gamma^{ham}_{2n}$ acts transitively on $P^k(\Gamma^{ham}_{2n})$ from the left. More generally, if $M$ carries a transversely symplectic foliation we let 
$$P_{ham}^k(\mathcal{F}) \stackrel{\pi_k} \longrightarrow M$$
be the principal $J^k_{ham}$-bundle whose fibre at $x$ consists of the $k$-jets of local $\mathcal{F}$-projections that preserve the transverse structure and map $x$ to $0$.

Any element $\gamma$ in $C^*_{GF}(\mathfrak{ham}_{2n})$ determines a $\Gamma^{ham}_{2n}$-equivariant differential form $\Phi^{loc}_{\gamma}$ on $P^k(\Gamma^{ham}_{2n})$ for some sufficiently large $k$. Since the bundle $P_{ham}^k(\mathcal{F})$ is locally the pullback of the bundle $P^k(\Gamma^{ham}_{2n})$ under an $\mathcal{F}$-projection and the form $\Phi^{loc}_{\gamma}$ is $\Gamma^{ham}_{2n}$-equivariant, the pullbacks of these local forms glue together to give a well-defined form $\Phi_{\gamma}$ on the total space.

Let $\gamma$ be a representative of a given class in $H^m_{GF}(\mathfrak{ham}_{2n}^0, \mathfrak{sp}_{2n})$ and consider the pullback of $\gamma$ to the chain complex $C_{GF}^m(\mathfrak{ham}_{2n}, \mathfrak{sp}_{2n})$ induced by the projection 
$$H^{2n}_{\mathbb{R}} \longrightarrow  \mathfrak{ham}_{2n} \longrightarrow \mathfrak{ham}_{2n}^0.$$
Note that this map is \emph{not} a map of Lie algebras so that the pullback of $\gamma$ may not be closed. However, since $\mathfrak{ham}^0_{2n}$ is a subalgebra, it follows that it is closed modulo elements in the differential ideal generated by $(H^{2n}_{\mathbb{R}})^*$.

We then apply the construction described above to obtain a differential form $\Phi_{\gamma}$ on the $J^k_{ham}$-principal bundle $P_{ham}^k(\mathcal{F})$ over $M$. Since $\gamma$ was assumed to be $\mathfrak{sp}_{2n}$-basic, the form $\Phi_{\gamma}$ descends to a form $\hat{\Phi}_{\gamma}$ on the quotient bundle
 $$\hat{P}_{ham}^k(\mathcal{F}) = P_{ham}^k(\mathcal{F}) / Sp(2n,\mathbb{R}) \stackrel{\hat{\pi}_k}\longrightarrow M.$$
Moreover, the elements associated to the ideal generated by $(H^{2n}_{\mathbb{R}})^*$ vanish on the pullback foliation $\pi_k^*\mathcal{F}$ on $P_{ham}^k(\mathcal{F})$. Thus after quotienting out by $Sp(2n,\mathbb{R})$ we deduce that $\hat{\Phi}_{\gamma}$ defines an element in the foliated cohomology of the pullback foliation $\hat{\pi}_k^* \mathcal{F}$ on $\hat{P}_{ham}^k(\mathcal{F})$. Since the fibres of the quotient bundle are contractible, the map $\hat{\pi}_k$ then induces an isomorphism
$$H_{\hat{\pi}_k^* \mathcal{F}}^*(P_{ham}^k(\mathcal{F})) \stackrel{\cong} \longrightarrow H_{\mathcal{F}}^k(M),$$
and we define $\Phi(\gamma) = (\hat{\pi}^*_k)^{-1}(\hat{\Phi}_{\gamma})$. If the bundle $P_{ham}^k(\mathcal{F})$ is itself trivial and $s$ is any section, then $\Phi(\gamma) = s^*\Phi_{\gamma}$ as classes in $H_{\mathcal{F}}^k(M)$.

In the case of trivial normal bundles, it follows easily that the foliated classes $\gamma^{KM}_P$ defined via factorisations in Gelfand-Fuks cohomology in \cite{KM3} coincide with the classes $\gamma_P$ as given by Proposition \ref{p_1_fact_symp}.
\begin{prop}\label{equiv_gamma}
Let $\mathcal{F}$ be a transversely symplectic foliation of codimension $2n$ and assume that its normal bundle is trivial. Then for any polynomial $P$ as in Proposition \ref{p_1_fact_symp} we have $ \gamma^{KM}_P = \gamma_P$.
\end{prop}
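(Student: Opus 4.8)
The plan is to show that the natural map $\Phi$ carries the universal factorization underlying $\gamma^{KM}_P$ to the Chern-Weil factorization of Proposition \ref{p_1_fact_symp}, so that equality of the two foliated classes is forced by the uniqueness of the factorization. The ingredients are that the form-level construction $\gamma \mapsto \Phi_\gamma$ is multiplicative and that it sends the universal representatives of the Pontryagin classes and of the transverse symplectic form to $P(F^\nabla)$ and to $\omega$, respectively.

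First I would record that $\gamma \mapsto \Phi_\gamma$ is multiplicative on forms. This is immediate from the construction recalled above: each $\Phi_\gamma$ is glued from pullbacks of the equivariant local forms $\Phi^{loc}_\gamma$, and since pullback and the wedge product are local and mutually compatible one has $\Phi_{\gamma \wedge \gamma'} = \Phi_\gamma \wedge \Phi_{\gamma'}$. Because the normal bundle is trivial the bundle $P^k_{ham}(\mathcal{F})$ admits a global section $s$, and as observed in the construction $\Phi(\gamma) = [s^*\Phi_\gamma]$ in $H^*_\mathcal{F}(M)$; hence multiplicativity descends to $\Phi$.

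Next I would identify the images of the universal classes. Pulling back by $s$ the universal connection on the jet bundle gives a connection on $\nu_\mathcal{F}$; since its curvature entries lie in $I^2(\mathcal{F})$ it is a Bott connection, and the universal Pontryagin cochain pulls back to the Chern-Weil form $P(F^\nabla)$. This is the Bott-Haefliger identification of Gelfand-Fuks with Chern-Weil characteristic classes, and the analogous computation sends the universal symplectic cochain to the transverse form $\omega$. Crucially, the factorization $\omega^k \wedge P = \omega^n \wedge \gamma^{KM}_P$ holds already at the level of cochains, by exactly the divisibility mechanism used in the proof of Proposition \ref{p_1_fact}: the universal curvature entries lie in the ideal of transverse covectors, so a degree-$2n$ polynomial in them is divisible by the transverse volume $\omega^n$. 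Applying the multiplicative form-level construction and pulling back by $s$ therefore yields the genuine identity of forms $\omega^k \wedge P(F^\nabla) = \omega^n \wedge s^*\Phi_{\gamma^{KM}_P}$. This is precisely the defining factorization for $\gamma_P$ in Proposition \ref{p_1_fact_symp}, and since $\omega^n$ is a transverse volume form we have $Ann(\omega^n) = I^{2n}(\mathcal{F})$; the two solutions therefore agree modulo $I^{2n}(\mathcal{F})$, whence $\Phi(\gamma^{KM}_P) = [s^*\Phi_{\gamma^{KM}_P}] = [\gamma_P] = \gamma_P$ in $H^{2n}_\mathcal{F}(M)$.

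The step I expect to be the main obstacle is the identification of the universal classes: one must verify carefully that $s^*$ of the canonical jet-bundle connection is a Bott connection realising the universal Pontryagin cochains and that the symplectic cochain maps to $\omega$. This amounts to unwinding the Bott-Haefliger description of $\Phi$ while tracking the differential ideal generated by $(H^{2n}_{\mathbb{R}})^*$, whose presence is exactly what allows the non-closed pullback cochains to still represent the correct foliated classes. Once this identification and the multiplicativity of $\Phi$ are in place, the remainder of the argument is formal.
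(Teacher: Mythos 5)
Your proposal is correct and follows essentially the same route as the paper: pull the universal Gelfand--Fuks factorisation $\omega^k \wedge P(\Omega^i_j) = \omega^n \wedge \gamma^{KM}_P$ back along the section $s$ of the jet bundle afforded by the trivial normal bundle, identify the resulting curvature forms with those of a Bott connection on $\nu_{\mathcal{F}}$, and conclude by uniqueness of the factorisation modulo $I^{2n}(\mathcal{F})$. The paper states these steps more tersely (in particular it does not spell out the multiplicativity of $\Phi$ or the identification of $s^*$ of the universal connection as a Bott connection), but the argument is the same.
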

\begin{proof}
Let $P$ be a polynomial as in Proposition \ref{p_1_fact_symp}. Then the class $\gamma^{KM}_P$ determines a factorisation 
$$\omega^k \wedge P(\Omega^i_j) =  \omega^n \wedge \gamma^{KM}_P$$
in Gelfand-Fuks cohomology, where $\Omega^i_j$ denotes the curvature of the universl Bott connection (\cite{KM3}, p.\ 13 ff.). Under the map $H^*_{GF}(\mathfrak{ham}_{2n}^0, \mathfrak{sp}_{2n}) \to H^*_{\mathcal{F}}(M)$ described above, this decomposition then becomes a factorisation of $\omega^k \wedge P(\Omega^i_j)$ for the universal Bott connection on $P_{ham}^3(\mathcal{F})$. More specifically, if $\omega$ is a transverse symplectic form for $\mathcal{F}$ and $\pi_3$ is the bundle projection of $P_{ham}^3(\mathcal{F})$, then $\pi_3^* \thinspace \omega$ is a transverse symplectic form for $\pi_3^{*} \mathcal{F}$ and
\[\omega^k \wedge P(\Omega^i_j) =  (\pi^*_3 \thinspace \omega)^{n} \wedge \gamma^{KM}_P(\pi_3^{*} \mathcal{F}).\]
By assumption the bundle $P_{ham}^3(\mathcal{F}) \rightarrow M$ has a section $s$. Applying $s^*$ to both sides of this equation, we conclude that
$$\omega^n \wedge \gamma_P(\mathcal{F}) = \omega^n \wedge \gamma^{KM}_P(\mathcal{F}) \Longrightarrow \gamma^{KM}_P(\mathcal{F}) = \gamma_P(\mathcal{F}) \text{ in } H^*_{\mathcal{F}}(M). \qedhere$$
\end{proof}

\section{Leaf invariants and Reinhart's construction}\label{Reinhart}
As in the previous section, we let $P_{ham}^k(\mathcal{F})$ be the principal $J^k_{ham}$-bundle of $k$-jets of local $\mathcal{F}$-projections that preserve the transverse symplectic structure of a foliation $\mathcal{F}$. If $L$ is a leaf of $\mathcal{F}$, then the restriction of the bundle $P_{ham}^k(\mathcal{F})$ to $L$ is a flat principal $J^k_{ham}$-bundle. An analogous construction to that given in Section \ref{GF_cohomology}, associates to any class in $H_{GF}^*(\mathfrak{ham}_{2n}^0, \mathfrak{sp}_{2n})$ a class in $H^*(L,\mathbb{R})$. Here the space $P^k(\Gamma^{ham}_{2n})$ is replaced by the Lie group $J^k_{ham}$ and the forms $\Phi_{\gamma}$ are just left invariant forms associated to elements in $\Lambda^*(\mathfrak{g}_{ham}^{k})^* $. Moreover, one has the following commutative diagram, where the arrow on the left is induced by the quotient map and the arrow on the right is given by restriction
\[ \xymatrix{H^*_{GF}(\mathfrak{ham}_{2n}^0, \mathfrak{sp}_{2n}) \ar[r]^-{\Phi} & H_{\mathcal{F}}^*(M) \ar[d]  \\
H^*(\mathfrak{g}_{ham}^{k}, \mathfrak{sp}_{2n}) \ar[u] \ar[r]^-{\Phi} & H^*(L,\mathbb{R}).}\]
The characteristic classes of the individual leaves that are defined via this method are called leaf invariants and were first considered by Reinhart (see \cite{STi}). Since leaf invariants are in particular characteristic classes of flat principal $J^k_{ham}$-bundles they determine elements in group cohomology and we obtain the following \emph{Reinhart map}:
$$H^*(\mathfrak{g}_{ham}^{k}, \mathfrak{sp}_{2n}) \longrightarrow H^*((J^k_{ham})_{\delta},\mathbb{R}).$$

\section{Non-triviality of leafwise characteristic classes}
The aim of this section is to prove the non-triviality of the leaf invariant associated to the class $\gamma_1=\gamma^{KM}_{p_1}$ and we begin with a short summary of the steps involved in proof.
\medskip

\textbf{Step 1:} Show that $\gamma_1$ is non-trivial in $H^*((J^m_{ham})_{\delta},\mathbb{R})$ for all $m$.

\medskip

\textbf{Step 2:} Show non-triviality in $H^*(\mathcal{G}^k_{ham},\mathbb{R})$ for germs of differentiability class $C^k$.

\medskip

\textbf{Step 3:} Show non-triviality for flat $\mathbb{R}^2$-bundles with holonomy of class $C^k$.

\medskip

We first turn to step 1. The class $\gamma_1$ in $H^2_{GF}(\mathfrak{ham}_{2}^0, \mathfrak{sp}_2)$ is defined via the explicit cocycle representative (\cite{KM3}, p.\ 11)
\[(x^3 \wedge y^3)^* - \frac{1}{3}(x^2y \wedge xy^2)^* \in \Lambda^2(S^3H^2_{\mathbb{R}})^*.\]
Here we have identified $S^3H^2_{\mathbb{R}}$ with the space of homogeneous degree $3$ polynomials and the vectors $(x^3 \wedge y^3)^*, (x^2y \wedge xy^2)^*$ are elements of the basis of $\Lambda^2(S^3H^2_{\mathbb{R}})^*$ induced by the standard basis
$$\{(x^3)^*,(x^2y)^*,(xy^2)^*,(y^3)^*\} \subset (S^3H^2_{\mathbb{R}})^*$$
 under wedge products. We claim that the cohomology class $\gamma_1 \in H^2((J^2_{ham})_{\delta}, \mathbb{R})$ given by Reinhart's construction is non-trivial. For let $K_2 \cong S^3H^2_{\mathbb{R}} \cong\mathbb{R}^4$ be the kernel of the natural map
\[J^2_{ham} \to J^1_{ham}.\]
If $\mathfrak{k}_{2}$ denotes the Lie algebra of $K_2$, we have the following commutative diagram:
\[ \xymatrix{H^*(\mathfrak{g}_{ham}^{2}, \mathfrak{sp}_{2}) \ar[r] \ar[d] & H^2((J^2_{ham})_{\delta}, \mathbb{R}) \ar[d]  \\
H^*(\mathfrak{k}^*_{2}) = \Lambda^2 (\mathbb{R}^4)^* \ar[r] & H^2(\mathbb{R}^4_{\delta}, \mathbb{R}).}\]
The image of $\gamma_1$ in $\Lambda^2 (\mathbb{R}^4)^*$ is non-zero and the Reinhart map is injective for any vector space. We conclude that $\gamma_1$ is non-trivial in $H^2((J^2_{ham})_{\delta}, \mathbb{R})$.
\begin{rem}\label{Euler_independent}
Note that $\gamma_1$ and the Euler class are linearly independent, since the Euler class restricts trivially to the kernel of the map $J^2_{ham} \to J^1_{ham}$.
\end{rem}
\noindent The following proposition then completes Step 1.
\begin{prop}\label{inj_jets}
For all $m \geq 3$ the natural map $J^m_{ham} \to J^2_{ham}$ induces an injection
$$H^2((J^{2}_{ham})_{\delta}) \to H^2((J^{m}_{ham})_{\delta}).$$
\end{prop}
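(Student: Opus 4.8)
The plan is to factor the projection $J^m_{ham}\to J^2_{ham}$ through the tower $J^m_{ham}\to J^{m-1}_{ham}\to\cdots\to J^2_{ham}$ and prove injectivity one step at a time. Since inflation in group cohomology is functorial, the inflation map attached to the composite projection is the composite of the inflation maps attached to the single steps $J^j_{ham}\to J^{j-1}_{ham}$ for $3\le j\le m$; as a composite of injections is injective, it suffices to prove that each one-step inflation $H^2((J^{j-1}_{ham})_\delta)\to H^2((J^{j}_{ham})_\delta)$ is injective.

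Fix $j$ and consider the extension of discrete groups $1\to K_j\to J^j_{ham}\to J^{j-1}_{ham}\to 1$, where $K_j$ is the kernel of $J^j_{ham}\to J^{j-1}_{ham}$. As in the discussion preceding the statement, $K_j$ is an abelian vector group isomorphic to $S^{j+1}H^2_{\mathbb{R}}$, since its Lie algebra $S^{j+1}$ satisfies $\{S^{j+1},S^{j+1}\}\subset S^{2j}=0$ in $\mathfrak{g}^j_{ham}$; moreover the conjugation action of $J^{j-1}_{ham}$ on $K_j$ factors through $J^1_{ham}=Sp(2,\mathbb{R})$ and is the $(j+1)$-st symmetric power of the standard representation, because the only brackets $\{S^a,S^{j+1}\}$ surviving in $\mathfrak{g}^j_{ham}$ are those with $a=2$. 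I would then feed this extension into the inflation–restriction (five term) exact sequence of Lyndon–Hochschild–Serre with $\mathbb{R}$-coefficients,
\[0\to H^1((J^{j-1}_{ham})_\delta)\to H^1((J^{j}_{ham})_\delta)\to H^1((K_j)_\delta)^{J^{j-1}_{ham}}\to H^2((J^{j-1}_{ham})_\delta)\to H^2((J^{j}_{ham})_\delta).\]
Exactness at the fourth term shows that the kernel of the inflation is the image of $H^1((K_j)_\delta)^{J^{j-1}_{ham}}=\operatorname{Hom}(K_j,\mathbb{R})^{Sp(2,\mathbb{R})}$, so the whole proposition reduces to the vanishing statement: for $j\ge 3$ there is no nonzero $Sp(2,\mathbb{R})$-invariant additive homomorphism $\phi\colon K_j=S^{j+1}H^2_{\mathbb{R}}\to\mathbb{R}$.

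This vanishing is the crux, and it is where I expect the genuine difficulty to lie. Because $K_j$ and the acting group are \emph{discrete}, $\operatorname{Hom}(K_j,\mathbb{R})$ contains many non-continuous (merely $\mathbb{Q}$-linear) homomorphisms, so the usual ``a nontrivial irreducible representation has no invariant functional'' argument does not apply. To overcome this I would exploit the diagonal torus $h_\lambda=\operatorname{diag}(\lambda,\lambda^{-1})$: on a weight vector $w$ of nonzero weight $k$ one has $h_\lambda w=\lambda^k w$, so invariance forces the additive function $\psi(t)=\phi(tw)$ to satisfy $\psi(\mu c)=\psi(c)$ for all $\mu>0$, and combined with additivity ($\psi(2c)=2\psi(c)$) this yields $\psi\equiv 0$. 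Hence $\phi$ annihilates every weight space of nonzero weight, and by additivity it annihilates their real span. When $j$ is even these are all the weight spaces and we are done.

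It remains to treat the one-dimensional weight-zero space, which occurs for $j$ odd and is spanned by $w_0=x^p y^p$ with $2p=j+1$. Projecting $g\cdot w_0$ onto torus weight spaces and using the previous vanishing gives $\phi(g w_0)=\phi(c(g)\,w_0)$, where $c(g)$ is the coefficient of $w_0$ in $g w_0$; invariance then reads $\psi(c(g))=\psi(1)$ with $\psi(t)=\phi(tw_0)$. A short computation with $g=\begin{pmatrix}a&b\\c&d\end{pmatrix}$ writes $c(g)$ as a polynomial of degree $p=(j+1)/2\ge 2$ in $u=bc$ (with $ad=1+u$), which is therefore nonconstant, so $\{c(g):g\in Sp(2,\mathbb{R})\}$ contains a nondegenerate interval. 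Thus the additive $\psi$ is constant on an interval, hence vanishes on an interval about $0$, hence vanishes identically; in particular $\phi(w_0)=\psi(1)=0$, so $\phi=0$. This gives $\operatorname{Hom}(K_j,\mathbb{R})^{Sp(2,\mathbb{R})}=0$, completes the single step, and composing over $3\le j\le m$ yields the asserted injection. The essential obstacle throughout is the discreteness: ruling out wild invariant homomorphisms, which the torus–weight argument together with the matrix-coefficient/additivity trick is designed to handle.
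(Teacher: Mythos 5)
Your proof is correct and follows essentially the same route as the paper: factor through the tower $J^m_{ham}\to J^{m-1}_{ham}\to\cdots\to J^2_{ham}$, apply the five-term exact sequence to each extension $1\to S^{j+1}H^2_{\mathbb{R}}\to J^j_{ham}\to J^{j-1}_{ham}\to 1$, and kill the obstruction term using irreducibility of the $SL(2,\mathbb{R})$-action. The only difference is that the paper disposes of that term in one line (irreducibility forces the coinvariants $H_1((S^{j+1}H^2_{\mathbb{R}})_\delta)_{J^{j-1}_{ham}}$ to vanish), whereas you carefully verify the dual statement $\operatorname{Hom}(K_j,\mathbb{R})^{Sp(2,\mathbb{R})}=0$ including the possibility of discontinuous homomorphisms --- a legitimate point that the paper leaves implicit.
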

\begin{proof}
We first note that the map $J^m_{ham} \to J^2_{ham}$ factors as
\[J^{m}_{ham} \to J^{m-1}_{ham} \to ... \to J^2_{ham}.\]
Thus it suffices to show that the map $J^{m}_{ham} \to J^{m-1}_{ham}$ induces an injection on cohomology for all $m \geq 3$. The kernel of this map can be identified with the additive group of homogeneous polynomials $S^{m+1} H^{2}_{\mathbb{R}}$ of degree $m+1$ and the conjugation action of $J^{m-1}_{ham}$ is given by precomposition. Furthermore, the conjugation action factors through the natural map
\[J^{m-1}_{ham} \to J^{1}_{ham} = SL(2,\mathbb{R}).\]
We consider the five-term exact sequence associated to the extension  
\[1 \to S^{m+1} H^{2}_{\mathbb{R}} \to J^{m}_{ham} \to J^{m-1}_{ham} \to 1.\]
Then since the $SL(2,\mathbb{R})$-representation $S^{m+1} H^{2}_{\mathbb{R}}$ is irreducible, the group of co-invariants $H_1((S^{m+1} H^{2}_{\mathbb{R}})_{\delta})_{J^{m-1}_{ham}}$ is trivial. Thus by exactness, the injectivity of the map in second cohomology follows.
\end{proof}
\begin{rem}\label{linear_holonomy}
An explicit computation shows that
\[\frac{1}{9}d(x^2y^2)^* = (x^3 \wedge y^3) - \frac{1}{3}(x^2y \wedge xy^2)^* \text{ in } C^*_{GF}(\mathfrak{ham}_{2}^1),\]
which in particular implies that $\gamma_1$ is a cocycle. This, however, does \emph{not} hold in the $SL(2,\mathbb{R})$-equivariant subcomplex
$$C^*_{GF}(\mathfrak{ham}_{2}^0,\mathfrak{sp}_2) = C^*_{GF}(\mathfrak{ham}_{2}^1)^{SL(2,\mathbb{R})} \subset C^*_{GF}(\mathfrak{ham}_{2}^1),$$
since $(x^2y^2)^*$ is not $SL(2,\mathbb{R})$-equivariant. The above computation also holds in the Lie algebra cohomology $H^*(\mathfrak{g}^m_{ham,Id})$ of the group $J^m_{ham, Id}$ of jets with trivial linear part for all $m \geq 3$. In particular, if $\gamma_1$ is non-trivial for a particular $J^{m}_{ham}$-bundle, then the image of holonomy map cannot lie in the kernel of the map $J^{m}_{ham} \to J^{1}_{ham} = SL(2,\mathbb{R})$.
\end{rem}
In order to extend our results from jets to actual germs we shall need a normal form theorem for Hamiltonian germs. This is provided by the following result of Banyaga, de la Llave and Wayne. 
\begin{thm}[\cite{BLW}, Th.\ 1.1]\label{lin_BLW}
Let $\phi$ be the germ of a $C^r$-Hamiltonian diffeomorphism fixing the origin, whose $(k-2)$-jet is linear and hyperbolic. Then there exists a $C^l$-Hamiltonian germ $\psi$ with $D_0 \psi = Id$ so that
\[\psi^{-1} \phi \psi = D_0 \phi,\]
provided that $r > 2k + 4$ and $ 1 \leq l \leq k A - B$, where $A,B$ are constants depending only on the eigenvalues of $D_0 \phi$.
\end{thm}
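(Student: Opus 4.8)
The plan is to solve the conjugacy equation directly by a fixed-point argument in a scale of Banach spaces, carrying out the whole construction inside the symplectic category so that the linearising germ $\psi$ is automatically Hamiltonian. Write $L = D_0\phi$ and $\phi = L + R$, where by hypothesis $R$ is $C^r$ and, since the $(k-2)$-jet of $\phi$ is linear, vanishes to order $k-1$ at the origin. Seeking $\psi = Id + h$ with $D_0 h = 0$, the relation $\phi\circ\psi = \psi\circ L$ becomes, after substituting and cancelling the linear term,
\[
h(Lx) - L\,h(x) = R\bigl(x + h(x)\bigr).
\]
Writing $\mathcal{L}h := h\circ L - L\cdot h$ for the homological operator, the task is to find a fixed point of $h \mapsto \mathcal{L}^{-1}R(Id + h)$. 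First I would establish invertibility of $\mathcal{L}$ on the relevant germs: since $L$ is hyperbolic it admits an invariant splitting $\mathbb{R}^{2n} = E^s \oplus E^u$, and the cohomological equation $\mathcal{L}h = g$ can be solved by summing over forward iterates along $E^s$ and backward iterates along $E^u$, the telescoping series converging because of the spectral gap. This gives a bounded inverse $\mathcal{L}^{-1}$ whose norm in $C^l$ is controlled by ratios of eigenvalues of $L$; it is exactly this dependence that produces the admissible range $1 \le l \le kA - B$, with $A,B$ determined by the eigenvalues. Note that hyperbolicity here plays the role that a small-divisor estimate plays in the elliptic case, so a plain contraction (linear) scheme suffices and no fast iteration is needed.

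Next I would run the contraction. On a sufficiently small ball about the origin the high-order vanishing of $R$ makes $h \mapsto \mathcal{L}^{-1}R(Id+h)$ a contraction on a small ball in a suitably weighted $C^l$ space, so the Banach fixed-point theorem produces a germ $h$ and hence $\psi = Id + h$ with $D_0\psi = Id$. The hypothesis $r > 2k+4$ is what guarantees enough a priori regularity of $\phi$ to feed the scheme and to absorb the derivative loss incurred both by $\mathcal{L}^{-1}$ and by composition with $R$; smoothing/interpolation across the scale of spaces is used to recover the final $C^l$ regularity.

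To keep $\psi$ Hamiltonian I would perform the same scheme within the symplectic category rather than for arbitrary germs. Because $L \in Sp(2n,\mathbb{R})$, the hyperbolic splitting is compatible with the symplectic form, which pairs $E^s$ with $E^u$ nondegenerately; consequently the homological operator preserves the subspace corresponding to exact symplectic (Hamiltonian) perturbations, and $\mathcal{L}^{-1}$ restricts to it with the same estimates. Concretely one parametrises the unknown symplectomorphism by a generating function and reads the conjugacy equation as an equation for that function, whose solvability follows from the same summation-over-orbits device; the resulting fixed point is then a Hamiltonian germ.

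The main obstacle is the sharp differentiability bookkeeping. The inverse $\mathcal{L}^{-1}$ costs regularity at a rate set by the logarithms of the expansion and contraction ratios of $L$, whereas the order-$(k-1)$ tangency of $R$ supplies regularity at a rate set by $k$; balancing these through the iteration, while tracking the losses from composition and from the weighted norms, is what pins down both the inequality $l \le kA - B$ and the lower bound $r > 2k+4$. The delicate point is ensuring these estimates survive the restriction to Hamiltonian perturbations --- so that one need not sacrifice the symplectic property to gain derivatives --- but the symplectic compatibility of the hyperbolic splitting is precisely what makes this go through.
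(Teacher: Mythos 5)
This statement is not proved in the paper at all: it is quoted verbatim as Theorem~1.1 of the cited reference [BLW] and used as a black box, so there is no internal proof to compare your argument against. Your sketch does follow the strategy that the cited paper itself is built on --- reduce to a homological (cohomology) equation $h\circ L - L\,h = R(Id+h)$, invert the homological operator by summing over forward orbits on $E^s$ and backward orbits on $E^u$ using the spectral gap, and close a plain contraction (no Nash--Moser iteration being needed in the hyperbolic case). As an outline of where the hypotheses come from, that is the right picture.

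As a proof, however, two steps are asserted where the real work lies. First, the symplectic step has a genuine gap: the set of $h$ for which $Id+h$ is a Hamiltonian germ is not a linear subspace, so it is not meaningful to say that $\mathcal{L}^{-1}$ ``restricts to it with the same estimates,'' and the nonlinear fixed-point map $h\mapsto \mathcal{L}^{-1}R(Id+h)$ has no evident reason to preserve symplecticity of $Id+h$. Reformulating via generating functions, as you suggest in passing, replaces the equation by a different (implicit, first-order) one whose solvability and regularity theory must be redone; in [BLW] the geometric versions are in fact obtained by an additional correction argument after the finite-differentiability Sternberg linearization, and controlling the extra loss of derivatives there is part of the content of the theorem. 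Second, the quantitative conclusions $r>2k+4$ and $1\le l\le kA-B$ are exactly the output of the $C^l$-norm estimates for $\mathcal{L}^{-1}$ and for composition in weighted spaces; your sketch names the mechanism but does not carry out the bookkeeping, so the stated thresholds are not actually derived. Since the paper under review simply imports this result, the appropriate move here is to cite [BLW] rather than to reprove it; if you do want to reprove it, these two points are where the argument must be filled in.
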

\noindent This is the main tool needed to show that the class $\gamma_1$ induces a non-trivial class in the cohomology of the group $\mathcal{G}^k_{ham}$ of Hamiltonian germs that have differentiability class $C^k$, thus concluding Step 2.
\begin{thm}\label{germ_case}
The class $\gamma_1$ is non-trivial in $H^2(\mathcal{G}^k_{ham})$ for all $2 \leq k < \infty$. 
\end{thm}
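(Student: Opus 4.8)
The plan is to pull $\gamma_1$ back along the jet homomorphism and to detect its image on a surface-group representation by germs with hyperbolic linear part. For each $m \le k$, taking $m$-jets of $C^k$ Hamiltonian germs gives a homomorphism $j^m \colon \mathcal{G}^k_{ham}\to J^m_{ham}$, and by construction the germ-level class satisfies $\gamma_1=(j^m)^*\gamma_1$. By Step 1 (Proposition \ref{inj_jets}) the class on the right is non-trivial in $H^2((J^m_{ham})_\delta)$ for every $m$, so it suffices to produce a finitely generated group $\Gamma$ and a homomorphism $\rho\colon\Gamma\to\mathcal{G}^k_{ham}$ whose induced jet representation $j^m\circ\rho$ pairs non-trivially with $\gamma_1$. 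Remark \ref{linear_holonomy} already indicates that such a $\rho$ must have non-trivial linear part, since the holonomy cannot land in $\ker(J^m_{ham}\to SL(2,\mathbb{R}))$; this is precisely the regime in which Theorem \ref{lin_BLW} applies.

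I would take $\Gamma=\pi_1(\Sigma)$ for a closed surface $\Sigma$ and begin from a Fuchsian (hence hyperbolic) representation $\bar\rho\colon\pi_1(\Sigma)\to SL(2,\mathbb{R})=J^1_{ham}$. The associated flat $SL(2,\mathbb{R})$-bundle carries a vanishing $\gamma_1$, because $\gamma_1$ is relative to $\mathfrak{sp}_2$. The idea is then to deform $\bar\rho$ into $\mathcal{G}^k_{ham}$ along $K_2\cong S^3H^2_{\mathbb{R}}$, on which $\gamma_1$ restricts to the $SL(2,\mathbb{R})$-invariant symplectic form $\omega_0\in\Lambda^2 K_2^*$ coming from Step 1. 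A first-order deformation along $K_2$ is a twisted cocycle $u\in Z^1(\pi_1(\Sigma);S^3H^2_{\mathbb{R}})$, the coefficients being twisted by $\bar\rho$ composed with the irreducible $SL(2,\mathbb{R})$-action on $S^3H^2_{\mathbb{R}}$. I expect $\langle\rho^*\gamma_1,[\Sigma]\rangle$ to be computed by the cup product $u\cup u$ followed by $\omega_0$; since cup product on a surface is skew and $\omega_0$ is skew, this produces a \emph{symmetric} quadratic form on $H^1(\pi_1(\Sigma);S^3H^2_{\mathbb{R}})$, which is non-degenerate by Poincar\'e duality because $\omega_0$ is non-degenerate. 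Choosing $[u]$ with non-zero self-pairing makes the paired value non-zero and simultaneously guarantees that the deformed representation is not conjugate back into $SL(2,\mathbb{R})$, since conjugation by germs with trivial linear part only alters $u$ by a coboundary.

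It then remains to integrate this infinitesimal data to an honest homomorphism into $C^k$ germs. I would prescribe, for each generator, a $C^k$ Hamiltonian germ with linear part $\bar\rho(g_i)$ and quadratic ($S^3H^2_{\mathbb{R}}$) part $u(g_i)$, and then adjust the flat higher-order tails so that the single surface relation $\prod_i[\rho(a_i),\rho(b_i)]=1$ holds exactly at the level of germs and not merely jets. This is exactly where Theorem \ref{lin_BLW} enters: because every $\bar\rho(g_i)$ is hyperbolic, each germ is $C^l$-linearizable, the relevant conjugacy (cohomological) equations are solvable without small-divisor obstructions, and the germs can be matched along the relation while the differentiability class is kept under control by the constants $A,B$ of the theorem — this is what forces $m$ and the intermediate smoothness to be taken large relative to $k$. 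Once $\rho$ is built, $j^m\circ\rho$ realises the chosen cocycle, so $\langle\rho^*\gamma_1,[\Sigma]\rangle\neq0$ and hence $\gamma_1\neq0$ in $H^2(\mathcal{G}^k_{ham})$.

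The main obstacle is this last step: upgrading the jet-level relation to an exact relation among germs. Closing up the surface relation in $\mathcal{G}^k_{ham}$ amounts to solving a non-linear equation for the flat tails, and it is the hyperbolicity of the linear parts, through the normal form of Theorem \ref{lin_BLW}, that makes this solvable in a fixed finite differentiability class. A secondary point, handled by the cup-product computation, is to ensure the construction does not inadvertently yield a representation conjugate into $SL(2,\mathbb{R})$, where $\gamma_1$ would vanish; the non-vanishing self-pairing of $[u]$ rules this out and is ultimately what witnesses that $\gamma_1$ survives the passage from jets to germs.
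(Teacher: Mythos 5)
Your overall strategy (detect $\gamma_1$ through the jet homomorphisms, produce a surface-group representation pairing non-trivially with it, then realise that representation by $C^k$ germs with the help of the hyperbolic linearisation theorem) matches the paper's, and your cup-product computation $\langle\rho^*\gamma_1,[\Sigma]\rangle=\omega_0(u\cup u)$ for $u\in H^1(\pi_1(\Sigma);S^3H^2_{\mathbb{R}})$ is a reasonable, more explicit substitute for the paper's bare appeal to a class $\sigma\in H_2((J^m_{ham})_\delta)$ pairing non-trivially with $\gamma_1$. The gap is in the final step, which you yourself flag as ``the main obstacle'' but never actually close. The germ you must control is $\phi=\prod_i[\rho(a_i),\rho(b_i)]$ for chosen germ representatives of the generators: by construction its $m$-jet is the \emph{identity}, so its linear part is the identity and Theorem \ref{lin_BLW} does not apply to it. Hyperbolicity of the individual $\bar\rho(g_i)$ is of no help here --- linearising the generators one at a time would simply conjugate the representation into $SL(2,\mathbb{R})$ and kill the class --- and there is no cohomological equation in \cite{BLW} that solves the non-linear problem of adjusting flat tails so that the surface relation holds exactly in $\mathcal{G}^k_{ham}$.

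The paper circumvents this with a concrete trick you are missing: one does not try to force $\phi=1$. Instead one multiplies by a hyperbolic $M\in SL(2,\mathbb{R})$, so that $\phi M$ has hyperbolic $m$-jet equal to $M$ and Theorem \ref{lin_BLW} applies to \emph{it}, producing $\psi$ with $D_0\psi=Id$ and $\psi^{-1}\phi M\psi=M$ in class $C^l$, with $l\geq k$ once $m$ is large. Rearranged, this is the exact relation $\prod_i[\alpha_i,\beta_i]\,[M,\psi^{-1}]=1$ in $\mathcal{G}^k_{ham}$, i.e.\ a representation of $\pi_1(\Sigma_{g+1})$: one accepts an extra handle instead of solving your closing problem. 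One must then check the extra handle does not alter the pairing; this holds because its image in $H_2((J^2_{ham})_\delta)$ is the class of the torus representation spanned by the $2$-jets of $M$ and $\psi^{-1}$, and a hyperbolic $M$ has no non-zero fixed vector in $S^3H^2_{\mathbb{R}}$, forcing the $2$-jet of $\psi^{-1}$ to be trivial and that class to vanish. Without this (or an equivalent) device your argument does not go through.
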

\begin{proof}
By Proposition \ref{inj_jets} the image of $\gamma_1$ in $H^2((J^m_{ham})_{\delta}, \mathbb{R})$ is non-trivial for all $m$. We let $\sigma$ be an integral homology class in $H_2((J^m_{ham})_{\delta})$ that pairs non-trivially with $\gamma_1$. Such a class is equivalent to a representation of some surface group $\pi_1(\Sigma_g) \to J^m_{ham}$. We let $a_i,b_i$ be standard generators for $\pi_1(\Sigma_g)$ and let $\alpha_i, \beta_i$ denote their images in $J^m_{ham}$. Then if we consider the $m$-jets $\alpha_i,  \beta_i$ as (smooth) germs in the natural way, we see that the $m$-jet of
\[\phi = \prod_{i=1}^g [\alpha_i,\beta_i]\]
is the identity. Thus after multiplying $\phi$  with a hyperbolic element $M \in SL(2, \mathbb{R})$, Theorem \ref{lin_BLW} implies that we may linearise the resulting germ by a Hamiltonian diffeomorphism germ $\psi$ of class $C^l$ with $D_0\psi = Id$. Since the value of $l$ grows linearly with $m$, after taking $m$ large enough we may assume that $l = k$ and, hence, the following holds in $\mathcal{G}^k_{ham}$
\[\psi \prod_{i=1}^g [\alpha_i,\beta_i] M \psi^{-1} = M \Longleftrightarrow \prod_{i=1}^g [\alpha_i, \beta_i] [M,\psi^{-1}] = 1.\]
We let $\sigma'$ denote the homology class in $H_2(\mathcal{G}^k_{ham})$ associated to the representation of the surface group $\pi_1(\Sigma_{g + 1})$ given by the above relation. The image of $\sigma'$ in $H_2((J^2_{ham})_{\delta})$ then decomposes as $\sigma + \tau$, where $\sigma$ is our original class and $\tau$ is the class associated to the representation of the fundamental group of the 2-torus defined by the $2$-jets of the elements $\psi^{-1}$ and $M$. Since $D_0\psi = Id$, the conjugation action of $M$ on $\psi^{-1}$ is by precomposition in $S^{3} H^2_{\mathbb{R}}$, which can only be trivial if $\psi^{-1} = Id$. Thus we conclude that $\tau = 0$ and hence $\gamma_1(\sigma') = \gamma_1(\sigma) \neq 0$.
\end{proof}
To complete the third and final step we must extend classes given by germs to classes defined by actual $C^k$-symplectomorphisms. We begin with the following lemma.
\begin{lem}\label{Thurston_argument}
Let $\phi$ be an element in the group $Ham_c^k(\mathbb{R}^{2} \setminus \{ 0 \})$ that consists of Hamiltonian diffeomorphisms of class $C^k$ with compact support. Then $\phi$ can be written as a product of commutators of elements in $Symp^k(\mathbb{R}^{2} \setminus \{ 0 \})$ which are the identity near the origin.
\end{lem}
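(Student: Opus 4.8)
The plan is to prove the statement by the classical Thurston-type argument: \emph{fragment} $\phi$ into pieces with small support, and then use a \emph{displacement} (infinite swindle) to exhibit each piece as a single commutator. Since $\phi$ is a \emph{Hamiltonian} diffeomorphism with compact support, it is the time-one map of a compactly supported Hamiltonian isotopy and in particular has vanishing flux. Using this, I would apply a fragmentation argument (in the spirit of Thurston and Banyaga): choosing a finite cover of $\mathrm{supp}(\phi)$ by embedded discs $B_1,\dots,B_N\subset\mathbb{R}^{2}\setminus\{0\}$, none of which encloses the origin, one writes $\phi=\phi_1\cdots\phi_N$ with each $\phi_j$ supported in $B_j$. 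I would emphasise that the Hamiltonian hypothesis is exactly what is needed at this point: a genuinely symplectic diffeomorphism whose support separates the puncture from infinity carries a nonzero flux across the core circle of an annular neighbourhood of its support, and this flux obstructs any decomposition into disc-supported maps. The vanishing of the flux for Hamiltonian $\phi$ is what permits the reduction to pieces whose supports do not wind around the puncture.

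For each fragment I would displace its support to infinity by a symplectic \emph{shear}. After a linear area-preserving change of frame placing $B_j$ in a strip along a line that misses a neighbourhood of the origin, the map $g_j(x,y)=(x+h(y),y)$ is a symplectomorphism for any function $h$; it is the identity near the origin provided $h$ vanishes outside a thin neighbourhood of that line, and it can be arranged to be a pure translation by a fixed vector on the core of the strip containing $B_j$. Choosing the translation length larger than the diameter of $B_j$ makes the iterates $g_j^{i}(B_j)$, $i\ge 0$, pairwise disjoint, escaping to infinity while remaining at a fixed positive distance from the origin. The infinite product $F_j=\prod_{i\ge 0}g_j^{i}\,\phi_j\,g_j^{-i}$ then converges to an element of $Symp^{k}(\mathbb{R}^{2}\setminus\{0\})$: its factors have pairwise disjoint, locally finite supports (so they commute and the product is unambiguous), and because $g_j$ is an honest translation on the relevant region each factor has the same $C^{k}$-norm as $\phi_j$. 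As all these supports avoid a neighbourhood of the origin, $F_j$ is the identity there. The telescoping relation
\[ F_j=\phi_j\cdot g_j F_j g_j^{-1}, \qquad \phi_j=[F_j,g_j], \]
then exhibits each fragment as a single commutator of elements that are the identity near the origin.

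Combining the pieces gives $\phi=\prod_{j=1}^{N}[F_j,g_j]$, a product of commutators of elements of $Symp^{k}(\mathbb{R}^{2}\setminus\{0\})$ that are the identity near the origin, as required. The step I expect to demand the most care is the fragmentation: one must produce disc-supported pieces whose supports do not encircle the puncture, and it is precisely here that the Hamiltonian (zero-flux) hypothesis and the non-simply-connected topology of $\mathbb{R}^{2}\setminus\{0\}$ interact. Once the pieces are supported in displaceable discs the swindle is routine, and the only remaining verifications, that the infinite product is a genuine $C^{k}$ symplectomorphism and that it stays trivial near the origin, both follow from the disjointness and local finiteness of the translated supports.
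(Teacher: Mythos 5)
Your proposal is correct and follows essentially the same route as the paper: fragmentation of the compactly supported Hamiltonian diffeomorphism into disc-supported pieces avoiding the origin, followed by the infinite-swindle/displacement trick writing each piece as a single commutator $[F_j,g_j]$ with both entries the identity near the puncture. The paper's proof is identical in structure (it reduces to one ball and uses a single displacing symplectomorphism $\gamma$ fixing a neighbourhood of $0$), so the only difference is that you spell out the displacing map as an explicit shear and the convergence of the infinite product in more detail.
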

\begin{proof}
First of all since $\phi$ can be connected to the identity by a path of Hamiltonian diffeomorphisms with compact support, the standard fragmentation argument given in the smooth case applies and we may assume that $\phi$ is a product of elements with support in a ball (cf.\ \cite{Ban}, p.\ 110). Thus it suffices to consider the case where $\phi$ has support in a ball $B$ that is disjoint from a closed neighbourhood $N$ about the origin. Let $\gamma \in Symp^k(\mathbb{R}^{2} \setminus 0)$ be such that the sets $\gamma^n(B)$ are pairwise disjoint and are disjoint from $N$ for all $n \geq 0$. By choosing $\gamma$ appropriately we may further assume that it fixes a neighbourhood of the origin. We then define
$$\psi = \prod_{n = 0}^{\infty} \gamma^{n} \circ \phi \circ \gamma^{-n} \in Symp^k(\mathbb{R}^{2} \setminus \{ 0 \})$$
and one computes that $ \phi = [\psi,\gamma]$.
\end{proof}
We will also need an explicit description of the Euler class of an $\mathbb{R}^2$-bundle in terms of its holonomy representation. Using the fact that the inclusion of the compactly supported symplectomorphism group $Symp_c^k(\mathbb{R}^{2} \setminus \{ 0 \})$ in $Diff_c^k(\mathbb{R}^{2} \setminus \{ 0 \})$ is a weak homotopy equivalence, the following is a straight forward application of the methods discussed in \cite{Bow2}, and we leave the details to the reader.
\begin{prop}\label{Fukui_Symp}
Let $\alpha$ be the natural homomorphism from $Symp_c^k(\mathbb{R}^2 \setminus \{ 0 \})$ to the compactly supported mapping class group $MCG_c(\mathbb{R}^2 \setminus \{ 0 \}) \cong \mathbb{Z}$ and consider the following extension of groups
\[ 1 \to Symp_c^k(\mathbb{R}^2 \setminus \{ 0 \}) \to Symp_c^k(\mathbb{R}^{2},0) \to \mathcal{G}^k_{ham} \to 1.\]
Further let $a_i,b_i$ be the standard generators of $\pi_1(\Sigma_g)$ and consider a representation $$\pi_1(\Sigma_g) \stackrel{\rho} \longrightarrow \mathcal{G}^k_{ham}.$$ If $\tilde{\alpha_i}, \tilde{\beta_i}$ are any lifts of $\rho(a_i),\rho(b_i)$ to $Symp_c^k(\mathbb{R}^{2},0)$, then the Euler number of the associated $\mathbb{R}^2$-bundle $E$ is given by the following expression:
\[e(E) = \pm \alpha(\prod_{i=1}^g [\tilde{\alpha_i}, \tilde{\beta_i}]).\]
\end{prop}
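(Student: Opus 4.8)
The plan is to exhibit $\alpha\bigl(\prod_i [\tilde{\alpha}_i, \tilde{\beta}_i]\bigr)$ as the evaluation of a central-extension cohomology class on the fundamental class $[\Sigma_g]$, and then to identify that class with the Euler class. First I would check that the expression is well defined. Surjectivity of $Symp_c^k(\mathbb{R}^2,0) \to \mathcal{G}^k_{ham}$ supplies lifts $\tilde{\alpha}_i, \tilde{\beta}_i$ of $\rho(a_i), \rho(b_i)$, and since $\rho$ is a homomorphism with $\prod_i[a_i,b_i]=1$ in $\pi_1(\Sigma_g)$, the germ at $0$ of $\prod_i[\tilde{\alpha}_i, \tilde{\beta}_i]$ is trivial; hence this product lies in the kernel $Symp_c^k(\mathbb{R}^2 \setminus \{0\})$ and $\alpha$ may be applied. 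Independence of the choice of lifts follows because any two lifts differ by a kernel element and $\alpha$, valued in the abelian group $MCG_c(\mathbb{R}^2 \setminus \{0\}) \cong \mathbb{Z}$, is invariant under the conjugation action of $Symp_c^k(\mathbb{R}^2,0)$: an orientation-preserving map fixing the puncture fixes the Dehn-twist generator up to isotopy, so $\alpha$ annihilates the commutator corrections produced by changing the lifts.

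These same two properties — that $\alpha$ is a homomorphism on the kernel and is conjugation invariant — show that pushing the given extension forward along $\alpha$ produces a central extension $1 \to \mathbb{Z} \to \widehat{\mathcal{G}} \to \mathcal{G}^k_{ham} \to 1$ and hence a class $c \in H^2(\mathcal{G}^k_{ham};\mathbb{Z})$. The standard description of the transgression for surface groups then gives $\langle c, \rho_*[\Sigma_g]\rangle = \alpha\bigl(\prod_i[\tilde{\alpha}_i, \tilde{\beta}_i]\bigr)$, so it remains to prove $c = \pm e$, where $e$ is the Euler class of the associated bundle $E$. For this I would pass from symplectomorphisms to diffeomorphisms: by the quoted weak homotopy equivalence $Symp_c^k(\mathbb{R}^2 \setminus \{0\}) \hookrightarrow Diff_c^k(\mathbb{R}^2 \setminus \{0\})$ the two groups have the same components, so $\alpha$ and the class $c$ are unchanged when computed in the smooth category. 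I would then invoke the computation of the Euler class of a flat $\mathbb{R}^2$-bundle in terms of this mapping-class invariant from the methods of \cite{Bow2}: build $E$ by clutching $\mathbb{R}^2$-charts over the $4g$-gon model of $\Sigma_g$ using the honest maps $\tilde{\alpha}_i, \tilde{\beta}_i$, so that the only failure of the cocycle condition is concentrated on a disc $D$ about the single vertex, where the monodromy is exactly $h = \prod_i[\tilde{\alpha}_i, \tilde{\beta}_i]$. The Euler number of the bundle obtained by filling in $D$ is the contribution of the clutching element $h$ across $\partial D$, and the key local computation identifies the contribution of the generator of $MCG_c(\mathbb{R}^2\setminus\{0\})$, a compactly supported Dehn twist about the origin, with $\pm 1$; additivity then yields $e(E) = \pm\,\alpha(h)$.

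The main obstacle is precisely this last identification: showing that a compactly supported Dehn twist used as clutching data contributes exactly $\pm 1$ to the Euler number, i.e. that $c$ is genuinely the Euler class rather than a multiple or an unrelated class. Here one must match the $\mathbb{Z}$ detected by $\alpha$ with the $\mathbb{Z} = \pi_1$ of the structure group governing the Euler class — equivalently, with the generator of $\pi_1(SL(2,\mathbb{R}))$ seen through the derivative action on directions at the fixed point — while keeping careful track of orientations, which accounts for the sign ambiguity in the statement. The weak homotopy equivalence is exactly what legitimises this matching in the symplectic setting, reducing the verification to the smooth computation of \cite{Bow2}.
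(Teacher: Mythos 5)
Your proposal is correct and follows exactly the route the paper intends: the paper gives no detailed argument, stating only that the result is ``a straightforward application of the methods discussed in \cite{Bow2}'' once the weak homotopy equivalence $Symp_c^k(\mathbb{R}^{2} \setminus \{ 0 \}) \hookrightarrow Diff_c^k(\mathbb{R}^{2} \setminus \{ 0 \})$ is used to reduce to the smooth/topological setting, and your write-up (well-definedness via the $G$-invariant homomorphism $\alpha$, the pushed-forward central extension, and the clutching computation identifying the compactly supported Dehn twist with $\pm 1$ in the Euler number) is precisely the content of those methods. No gaps.
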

As a final preliminary we note that the ordinary Flux homomorphism on the identity component $Symp^k_{c,0}(\mathbb{R}^2 \setminus \{0\})$ extends to the group $Symp_c^k(\mathbb{R}^{2},0)$. As usual for any path joining $Id$ to $\phi \in Symp^k_{c,0}(\mathbb{R}^2 \setminus \{0\})$ and $X_t = \frac{\partial}{\partial t}\phi_t$ the Flux homomorphism is defined as
\[Flux(\phi) = \int_0^1\iota_{X_t} \omega \in H^1_c(\mathbb{R}^2 \setminus \{0\},\mathbb{R}).\]
Although one usually only considers Flux homomorphisms on groups of smooth symplectomorphisms, the familiar properties of $Flux$ also hold in the case of symplectomorphisms that are only of class $C^k$ and the proofs given in e.g.\ \cite{McS} for the smooth case hold \emph{verbatim} for $C^k$-symplectomorphisms. In particular, if $\omega = -d\lambda$, then $Flux(\phi) = [\lambda - \phi^*\lambda]$ and $Ham^k_c(\mathbb{R}^2 \setminus \{0\})$ agrees with the kernel of $Flux$. Moreover, there is a unique compactly supported function $H_{\phi}$ on $\mathbb{R}^2$ so that $\lambda - \phi^*\lambda = dH_{\phi}$ and under the natural isomorphism $H^1_c(\mathbb{R}^2 \setminus \{0\},\mathbb{R}) \to \mathbb{R}$ given by integrating over a sufficiently long interval $[0,R]$ we see that
\[Flux(\phi) =\lim_{R \to \infty} \int_{[0,R]} \lambda - \phi^*\lambda = \lim_{R \to \infty} (H_{\phi}(R) - H_{\phi}(0)) = - H_{\phi}(0). \]
This definition in terms of primitives then extends immediately to $Symp_c^k(\mathbb{R}^{2},0)$ and we denote this extension by $Flux^{(\mathbb{R}^2,0)}$.

We are now ready to prove that there are foliations on $\mathbb{R}^2$-bundles with closed leaves so that the restriction of $\gamma_1$ is non-trivial.
\begin{thm}\label{gamma_non_triv}
For any $2 \leq k < \infty$ there exist topologically trivial foliated $\mathbb{R}^2$-bundles with holonomy in $Symp^k(\mathbb{R}^2,0)$ for which the restriction of the characteristic class $\gamma_1$ to the leaf corresponding to the origin is non-trivial. In particular, the class $\gamma_1$ is non-trivial in the foliated cohomology of the total space.
\end{thm}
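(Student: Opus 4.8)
The plan is to build the desired $\mathbb{R}^2$-bundle by realising the germ-level homology class $\sigma'$ from Theorem \ref{germ_case} as the holonomy of an honest foliated bundle with closed leaf. Recall that Theorem \ref{germ_case} produces a surface group representation $\rho\colon\pi_1(\Sigma_{g+1})\to\mathcal{G}^k_{ham}$ on which $\gamma_1$ evaluates non-trivially. First I would lift this germ representation to a representation into the group $Symp_c^k(\mathbb{R}^2,0)$ of compactly supported $C^k$-symplectomorphisms fixing the origin, using the extension of Proposition \ref{Fukui_Symp}. This lift exists set-theoretically on generators, but the product of commutators of the lifts need not be the identity; its failure to close up is measured precisely by the Euler class (via Proposition \ref{Fukui_Symp}) together with the $Flux^{(\mathbb{R}^2,0)}$ homomorphism introduced above.

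The heart of the construction is therefore to correct the lift so that the relator becomes trivial \emph{without} disturbing the value of $\gamma_1$. The obstruction to closing up lives in $Symp_c^k(\mathbb{R}^2\setminus\{0\})$, and I would analyse it through its two natural invariants: the Euler number $\alpha$ and the Flux. By Proposition \ref{Fukui_Symp} the Euler number of the bundle equals $\pm\alpha(\prod[\tilde\alpha_i,\tilde\beta_i])$; by Remark \ref{Euler_independent} the Euler class and $\gamma_1$ are linearly independent, so I may freely adjust the Euler number. The key point is that the leftover relator, being a product of commutators in $Symp_c^k(\mathbb{R}^2,0)$ whose image in $\mathcal{G}^k_{ham}$ is trivial, lands in $Symp_c^k(\mathbb{R}^2\setminus\{0\})$ and has trivial Flux, hence lies in $Ham_c^k(\mathbb{R}^2\setminus\{0\})$. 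Lemma \ref{Thurston_argument} then expresses this Hamiltonian element as a further product of commutators of elements that are the identity near the origin. Absorbing these extra commutators increases the genus of the surface but contributes nothing to $\gamma_1$, since $\gamma_1$ is a germ invariant at the origin and these correcting elements have trivial germ there.

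Concretely, the steps in order are: (i) choose lifts $\tilde\alpha_i,\tilde\beta_i\in Symp_c^k(\mathbb{R}^2,0)$ of the generators of $\rho$; (ii) compute that $\prod_{i=1}^{g+1}[\tilde\alpha_i,\tilde\beta_i]$ lies in $Symp_c^k(\mathbb{R}^2\setminus\{0\})$ with prescribed Euler number and compute its Flux; (iii) modify the lifts by elements supported away from the origin to arrange that the relator has vanishing Flux, so that it lies in $Ham_c^k(\mathbb{R}^2\setminus\{0\})$; (iv) apply Lemma \ref{Thurston_argument} to write this Hamiltonian relator as a product of commutators of symplectomorphisms that are the identity near $0$, thereby enlarging the genus and producing a genuine representation $\pi_1(\Sigma_h)\to Symp_c^k(\mathbb{R}^2,0)$; (v) form the associated flat $\mathbb{R}^2$-bundle, whose leaf over the origin carries the flat $J^m_{ham}$-bundle computing $\gamma_1$, and verify that $\gamma_1$ still evaluates as $\gamma_1(\sigma')\neq 0$ because all correcting factors are trivial as germs at $0$. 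Finally, by choosing the Euler-number correction I can arrange the bundle to be topologically trivial.

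The main obstacle I anticipate is step (iii)–(iv): controlling the flux of the relator and ensuring that the correcting factors supplied by the fragmentation and Thurston-type argument of Lemma \ref{Thurston_argument} are genuinely supported away from the origin, so that they do not alter the germ at $0$ and hence leave $\gamma_1$ untouched. One must also confirm that the resulting bundle is topologically trivial, which by Proposition \ref{Fukui_Symp} reduces to arranging the total Euler number to vanish; this is where the linear independence of $\gamma_1$ and the Euler class (Remark \ref{Euler_independent}) is essential, since it guarantees that killing the Euler number can be done independently of the value of $\gamma_1$.
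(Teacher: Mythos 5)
Your proposal is correct and follows essentially the same route as the paper: lift the germ representation from Theorem \ref{germ_case} to compactly supported symplectomorphisms fixing the origin, observe that the relator lands in $Symp_c^k(\mathbb{R}^2\setminus\{0\})$ with vanishing Euler obstruction (by Remark \ref{Euler_independent} and Proposition \ref{Fukui_Symp}) and vanishing Flux, and then kill it via Lemma \ref{Thurston_argument} using commutators supported away from $0$, which leave the germ at the origin and hence $\gamma_1$ unchanged. The only superfluous step is your (iii): no modification of the lifts is needed, since $Flux^{(\mathbb{R}^2,0)}$ is a homomorphism to an abelian group and therefore vanishes automatically on the product of commutators.
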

\begin{proof}
By Theorem \ref{germ_case} there exists a representation of some surface group $\pi_1(\Sigma_g) \to \mathcal{G}^k_{ham}$ for which $\gamma_1$ is non-trivial. We let $a_i,b_i$ be the standard generators of $\pi_1(\Sigma_g)$ and let $\alpha_i, \beta_i$ denote the images of these generators in $\mathcal{G}^k_{ham}$. We further let $\tilde{\alpha_i}, \tilde{\beta_i}$ be representatives of these germs that have compact support in $\mathbb{R}^2$. Then by construction
\[ \phi = \prod_{i=1}^g [\tilde{\alpha_i}, \tilde{\beta_i}] \]
is an element in $Symp_{c}^k(\mathbb{R}^2 \setminus \{ 0 \})$. As the Euler class can be chosen to be trivial (see Remark \ref{Euler_independent}), it follows from Proposition \ref{Fukui_Symp} that $\phi$ lies in the identity component of $Symp_c^k(\mathbb{R}^2 \setminus \{ 0 \})$.

Moreover, $\phi$ lies in $Ham_c^k(\mathbb{R}^2 \setminus \{ 0 \})$ since
\[Flux^{\mathbb{R}^2 \setminus \{ 0 \}}(\phi) = Flux^{(\mathbb{R}^{2},0)}(\phi) = Flux^{(\mathbb{R}^{2},0)}(\prod_{i=1}^g [\tilde{\alpha_i}, \tilde{\beta_i}]) = 0.\]
Thus by Lemma \ref{Thurston_argument} we may write $\phi^{-1}$ as a product of $N$ commutators of elements that have support disjoint from $0$. We may then define a representation $\pi_1(\Sigma_{g + N}) \to Symp^k(\mathbb{R}^2,0)$ on which the class $\gamma_1$ is non-trivial and by construction the associated foliated bundle is topologically trivial.
\end{proof}
\begin{rem}\label{rem_lin_holonomy}
Although the bundles in Theorem \ref{gamma_non_triv} are topologically trivial, by Remark \ref{linear_holonomy} we know that the central leaf must have linear holonomy. Thus the class $\gamma_1$ may be viewed as an obstruction to having trivial linear holonomy, even for leaves with topologically trivial normal bundles.
\end{rem}
\noindent By considering products of the foliations given in Theorem \ref{gamma_non_triv} we obtain the following generalisation as a corollary.
\begin{cor}\label{gamma_non_triv_gen}
There exist topologically trivial foliated $\mathbb{R}^{2n}$-bundles such that the restriction of $ \gamma_{n} = \gamma_{p^n_1}$ to some closed leaf $L$ is non-trivial. In particular, the class $\gamma_{n}$ is non-trivial in the foliated cohomology of the total space.
\end{cor}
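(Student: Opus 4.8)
The plan is to realise the required $\mathbb{R}^{2n}$-bundles as $n$-fold products of the codimension $2$ examples furnished by Theorem \ref{gamma_non_triv}, and to show that under such products the factorisation class $\gamma_{p_1^n}$ is the exterior product of the classes $\gamma_1$ of the factors.

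First I would fix, for each $i = 1, \ldots, n$, a topologically trivial foliated $\mathbb{R}^2$-bundle $E_i \to \Sigma_i$ with transversely symplectic foliation $\mathcal{F}_i$, defining form $\omega_i$, and closed central leaf $L_i$ on which $\gamma_1|_{L_i} \in H^2(L_i, \mathbb{R})$ is non-trivial, as provided by Theorem \ref{gamma_non_triv}. Writing $\pi_i$ for the projection of $E = E_1 \times \cdots \times E_n$ onto the $i$-th factor, the product foliation $\mathcal{F} = \mathcal{F}_1 \times \cdots \times \mathcal{F}_n$ is a transversely symplectic foliation of codimension $2n$ with defining form $\omega = \sum_i \pi_i^* \omega_i$, its normal bundle is $\nu_{\mathcal{F}} = \bigoplus_i \pi_i^* \nu_{\mathcal{F}_i}$, and its total space is a topologically trivial $\mathbb{R}^{2n}$-bundle over $\Sigma_1 \times \cdots \times \Sigma_n$, being a product of trivial bundles. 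Its central leaf is the closed manifold $L = L_1 \times \cdots \times L_n$.

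The key computation is to evaluate $p_1^n(F^{\nabla})$ for the product Bott connection $\nabla = \bigoplus_i \pi_i^*\nabla_i$, whose curvature is block diagonal, so that $p_1(F^{\nabla}) = \sum_i \pi_i^* p_1(F^{\nabla_i})$. Here I would exploit that each factor $\mathcal{F}_i$ has only two transverse directions: since the curvature entries lie in $I^2(\mathcal{F}_i)$ each carries transverse degree at least one, so $p_1(F^{\nabla_i})$ has transverse degree exactly two and equals $\omega_i \wedge \gamma_1^{(i)}$ for the factorisation class $\gamma_1^{(i)} = \gamma_1$ of $\mathcal{F}_i$, whence $p_1(F^{\nabla_i})^2$ would have transverse degree four and therefore vanishes. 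Expanding $p_1^n(F^{\nabla}) = (\sum_i \pi_i^* p_1(F^{\nabla_i}))^n$ by the multinomial formula, every term in which some factor appears with multiplicity at least two vanishes, and only the square-free term survives:
\[ p_1^n(F^{\nabla}) = n!\, \bigwedge_{i=1}^{n} \pi_i^* p_1(F^{\nabla_i}) = n!\,\Big(\bigwedge_{i=1}^n \pi_i^*\omega_i\Big) \wedge \bigwedge_{i=1}^{n} \pi_i^* \gamma_1^{(i)} = \omega^n \wedge \bigwedge_{i=1}^n \pi_i^*\gamma_1^{(i)}, \]
using $\omega^n = n!\,\bigwedge_i \pi_i^*\omega_i$ and that all the forms involved are of even degree and hence commute. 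By the uniqueness of the factorisation in Proposition \ref{p_1_fact_symp}, this identifies $\gamma_n = \gamma_{p_1^n} = \bigwedge_i \pi_i^*\gamma_1^{(i)}$ in $H^{2n}_{\mathcal{F}}(E)$.

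It then remains to restrict to the central leaf $L$. The restriction of $\pi_i^*\gamma_1^{(i)}$ to $L = \prod_j L_j$ is the pullback of $\gamma_1|_{L_i}$ under the projection $L \to L_i$, so the restriction of $\gamma_n$ to $L$ is the exterior (cross) product of the classes $\gamma_1|_{L_i} \in H^2(L_i,\mathbb{R})$. Since each $L_i$ is closed and each factor is non-trivial, the K\"unneth theorem over $\mathbb{R}$ shows this cross product is non-zero in $H^{2n}(L,\mathbb{R})$, which gives the claim; non-triviality in the foliated cohomology of $E$ follows immediately. The only point requiring genuine care is the vanishing of the mixed terms in the multinomial expansion --- that is, the transverse-degree bookkeeping that reduces $p_1^n(F^{\nabla})$ to its square-free part --- together with the resulting compatibility of the factorisation with products; once this is in place the K\"unneth step is routine.
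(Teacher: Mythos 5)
Your proposal is correct and follows essentially the same route as the paper: form the $n$-fold product of the codimension-$2$ examples from Theorem \ref{gamma_non_triv}, compute $p_1^n$ of the product Bott connection so that only the square-free term of the multinomial expansion survives (the paper phrases this as ``dimension considerations'' on the $4$-dimensional factors, you phrase it via transverse degree --- both are valid), identify $\gamma_n$ with the product of the classes $\gamma_1$ of the factors, and conclude by restricting to the product leaf and applying K\"unneth. The only cosmetic differences are that the paper uses $n$ copies of the same bundle $E$ and writes the restricted class as $(\pi_1^*\gamma_1|_L + \cdots + \pi_n^*\gamma_1|_L)^n$ rather than your $n!$-normalised exterior product.
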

\begin{proof}
Let $E$ be as in Theorem \ref{gamma_non_triv} and consider the $n$-fold product $E_n = E \times ... \times E$ with the product foliation $\mathcal{F}_n = \mathcal{F} \times ... \times \mathcal{F}$. This is then transversely symplectic with $\omega_n = \pi_1^* \omega + ... +  \pi_n^* \omega$, where $\pi_i$ denotes the projection to the $i$-th factor. For any Bott connection on $E$ the connection $\nabla_n = \pi_1^*\nabla \oplus ... \oplus \pi_n^*\nabla$ is Bott for $\mathcal{F}_n$ and
\[p_1(F^{\nabla_n}) = \pi_1^* p_1(F^{\nabla}) + ... +  \pi_n^* p_1(F^{\nabla}) = \pi_1^* \omega \wedge \pi_1^* \gamma_1 + ... +  \pi_n^* \omega \wedge \pi_n^* \gamma_n\]
and it follows by dimension considerations that
\begin{align*}
p^n_1(F^{\nabla_n}) & = (\pi_1^* \omega \wedge \pi_1^* \gamma_1 + ... +  \pi_n^* \omega \wedge \pi_n^* \gamma_1)^n
= (\pi_1^* \omega + ... +  \pi_n^* \omega)^n \wedge (\pi_1^* \gamma_1 + ... +  \pi_n^* \gamma_1)^n\\ 
& = \omega_n^n \wedge (\pi_1^* \gamma_1 + ... +  \pi_n^* \gamma_1)^n = \Omega \wedge \gamma_{n}(\mathcal{F}_n).
\end{align*}
Hence the restriction of $\gamma_{n}(\mathcal{F}_n)$ to $L_n = L \times ... \times L$ is $(\pi_1^* \gamma_1|_L + ... +  \pi_n^* \gamma_1|_L)^n$, which is non-trivial by the K\"unneth formula and the fact that $\gamma_1|_L$ is non-trivial by assumption.

\end{proof}
Corollary \ref{gamma_non_triv_gen} gives a generalisation of the non-triviality statement of Theorem 4 in \cite{KM3} to foliations with trivial normal bundles. This is however gained at the expense of considering foliations that are only of class $C^k$ on non-compact manifolds. We would further hope that it is possible to show similar results for transversely symplectic foliations on compact manifolds and for foliations that are smooth. The former would require an analogue of Banyaga's results for symplectomorphisms of class $C^k$, whereas the latter would probably require the development of new techniques since an extension of the results of \cite{BLW} to the smooth case seems unlikely.

\end{document}